 \newtheorem{thm}{Theorem}[section]
 \newtheorem{cor}[thm]{Corolary}
 \newtheorem{lem}[thm]{Lemma}
 \theoremstyle{definition}
 \newtheorem{exa}[thm]{Ejemplo}
\newcommand{\C}{{\mathbb C}}
\newcommand{\N}{{\mathbb N}}
\newcommand{\B}{{\mathbb B}}
\newcommand{\SB}{{\mathbb S}}
\begin{document}

\title{Self-adjoint Representations of Braid Groups}

\author{Claudia Mar\'{i}a Egea, Esther Galina}

\address{Facultad de Matem\'atica Astronom\'{i}a y F\'{i}sica,
Universidad Nacional de C\'ordoba, C\'ordoba, Argentina}
\email{ cegea@mate.uncor.edu, galina@mate.uncor.edu}
\begin{abstract}
We give a method to construct new self-adjoint representations of the braid group. In particular, we give a family of irreducible self-adjoint representations of dimension arbitrarily large. Moreover we give sufficient conditions for a representation to be constructed with this method.
\end{abstract}

\keywords{Braid Group; Irreducible Representations.}

\subjclass{ 20C99, 20F36}
\thanks{This work was partially supported by CONICET, SECYT-UNC, FONCYT.}
\maketitle
\section{Introduction}\label{Introduccion}

The braid group of $n$ string, $\B_n$, is presented by $n-1$ generators, $\tau_{1}, \dots,\tau_{n-1}$, and the following relations
$$
\left\{\begin{array}{ll}
       \tau_k\tau_j=\tau_j\tau_k, &\textrm{ whenever  }|k- j|>1;  \\
        \tau_k\tau_{k+1}\tau_k=\tau_{k+1}\tau_{k}\tau_{k+1},  &\ 1\leq k\leq n-2;
        \end{array}\right.
$$

The objective of this work is to give a step forward in the classification of the irreducible representations of $\B_n$.  As long as we known,
there are only few contributions in this sense, some known results are the following ones. The classification is completed for representations of dimension lower or equal to $n$. Formanek classified all the irreducible representations
of $\B_n$ of dimension lower than $n$ \cite{F}. Sysoeva did it for dimension equal to $n$ \cite{S}. Larsen and Rowell gave some results for unitary
representation of $\B_n$ of dimension multiples of $n$. In particular, they proved there are not irreducible representations of dimension $n+1$ \cite{LR}.
Levaillant proved when the Lawrence-Krammer representation is irreducible and when it is reducible \cite{L}.

In a previous work \cite{EG1}, we gave a family of irreducible representations of $\B_n$ described as follows. Let $m< n$ and let X be the set of $n$-tuples with $m$ ones and $n-m$ zeros. For example, if $n=4$ and $m=2$, $$X=\{(0,0,1,1),(0,1,0,1),(0,1,1,0),(1,0,0,1), (1,0,1,0),(1,1,0,0)\}$$

Let $V$ be a complex vector space with orthonormal basis $\beta=\{v_x: x\in X\}$. Note that the permutation group $\SB_n$ acts on $X$ permuting the coordinates. For each $x\in X$ choose a complex number $q_{x_k, x_{k+1}}$, which depends only on components $k$ and $k+1$ of $x$. We defined a representation $\phi_m:\B_n \rightarrow \textrm{GL} (V)$, given by
$$
\phi_m(\tau_k)(v_x)= q_{x_k, x_{k+1}} v_{\sigma_k(x)}
$$
where $\sigma_k$ is the transposition which permutes the places $k$ and $k+1$, and

$$q_{x_k, x_{k+1}}=\left
\{\begin{array}{ll}
    1       &\textrm{    if   } x_k=x_{k+1}   \\
    t       &\textrm{    if   } x_k\neq x_{k+1}
\end{array}\right.
$$ with $t$ a real number and $t\neq 0, 1, -1$.
If $n=4$ and $m=2$, the matrix are
$$
\phi_2(\tau_1)= \left(\begin{array}{cccccccccc}
          \  1      &\  \   &\  \      &\  \   &\  \        &\  \   \\
          \  \      &\  0   &\  0      &\  t   &\  0        &\  \    \\
          \ \       &\  0   &\  0      &\  0   &\  t        &\  \    \\
          \ \       &\  t   &\  0      &\  0   &\  0        &\  \    \\
          \ \       &\  0   &\  t      &\  0   &\  0        &\  \    \\
          \ \       &\  \   &\  \      &\  \   &\  \        &\  1
\end{array}\right),
\phi_2(\tau_2)= \left(\begin{array}{cccccccccc}
          \  0      &\  t   &\  \      &\  \   &\  \   &\  \     \\
          \  t      &\  0   &\  \      &\  \   &\  \   &\  \     \\
          \  \      &\  \   &\  1      &\  \   &\  \   &\  \     \\
          \  \      &\  \   &\  \      &\  1   &\  \   &\  \     \\
          \  \      &\  \   &\  \      &\  \   &\  0   &\  t     \\
          \  \      &\  \   &\  \      &\  \   &\  t   &\  0

\end{array}\right)
$$

$$
\phi_2(\tau_3)= \left(\begin{array}{cccccccccc}
          \ 1      &\ \   &\ \      &\ \   &\ \   &\ \     \\
          \ \      &\ 0   &\ t      &\ \   &\ \   &\  \    \\
          \  \     &\ t   &\ 0      &\ \   &\ \   &\ \     \\
          \  \     &\ \   &\ \      &\ 0   &\ t   &\ \     \\
          \  \     &\ \   &\ \      &\ t   &\ 0   &\ \     \\
          \  \     &\  \  &\ \      &\ \   &\ \    &\ 1
\end{array}\right)
$$

Let $n>2$, then $(\phi_m, V_m)$ is an irreducible representation of $\B_n$ of dimension
$\frac{n!}{m!(n-m)!}$, for all $1\leq m<n$. In particular, if $m=1$ is equivalent to the standard representation which is the unique irreducible representation of $\B_n$ of dimension $n$ \cite{S}. If $m=2$ this representations has dimension $n(n+1)/2$ as well as the Lawrence-Krammer representation, but they are not equivalent. Indeed, $\phi_m$ satisfies the condition (\ref{condicion (2)}) that is formulates below and the Lawrence-Krammer representation does not. Therefore, we are almost certainly that these representations are not equivalent to any other known representation.

Note that these representations satisfy the following properties for all $j,k$, $1\leq j,k\leq n-1$,
\begin{equation} \label{condicion (1)}
\phi_m(\tau_k) \textrm{    is a self-adjoint operator and }  \phi_m(\tau_k)^2\neq \lambda 1_V, \textrm{       for all } \lambda\in \C,
\end{equation}

\begin{equation}
\label{condicion (2)} [\phi_m(\tau_k)^2, \phi_m(\tau_j)^2]=0,
\end{equation}
On the other hand, the spectral decomposition of $\phi_m(\tau_k)^2$ is $t^2 P_{k,0} +  P_{k,1}$ for the appropriated projections $P_{k,l}$ and
$$
\phi_m(\tau_k) P_{j,l} \phi_m(\tau_k)^{-1}=\left
\{\begin{array}{ll}
     P_{j,l}                                  &\textrm{    if   }j\neq k+1 \\
    P_{k,0}P_{k+1,0}+P_{k,1}P_{k+1, 1}       &\textrm{    if   }j=k+1, \ \  l=0   \\
    P_{k,1}P_{k+1,0}+P_{k,0}P_{k+1, 1}       &\textrm{    if   }j=k+1, \ \  l=1
\end{array}\right.
$$

That is,
\begin{equation}\label{condicion (3)}
 \phi_m(\tau_k) P_{j,l} \phi_m(\tau_k)^{-1}\in N  \textrm{  for all spectral projections }  P_{j,l} \textrm{  of   } \phi_m(\tau_j)^2,
\end{equation}
where $N$ is the algebra of linear operators generated by the spectral projections
$\{P_{j,l}: 1\leq j\leq n-1, \  l=0,1\}$.

In this work we are going to parametrize, through $4$-tuples $(X, \pi, \nu, U)$, all the representations of $\B_n$ with these
three properties (see Theorem \ref{teorema 2}). In particular, they are representations of the quotient group of $\B_n$ given by the relation (\ref{condicion (2)}). Note that this quotient is a covering of $\SB_n$.

Moreover, we can give sufficient conditions for these representations to be irreducible (see Theorem \ref{irreduc}). As an important consequence of this parametrization, we construct explicit examples of irreducible representations of dimension arbitrarily large. More precisely, Theorem \ref{dimgrande} says that for  any positive integer $M$, there exists an irreducible representation of $\B_n$ of dimension greater than $M$.

The family of representations that satisfy the conditions (\ref{condicion (1)}), (\ref{condicion (2)}) and
 (\ref{condicion (3)}) contains a subfamily of some known ones as the diagonal local representations (see section \ref{Ejemplos}).

This work is divided in $4$ sections. In the section $2$, we give a method to obtain non unitary self-adjoint representations of $\B_n$ from a $4$-tuple $(X, \pi, \nu, U)$ and we parametrize the representations of the braid group mentioned before through of $4$-tuples. In section $3$, we
present the corresponding parameters of some known representations as certain diagonal local representations. Finally, in section $4$ we give some conditions in the $4$-tuple to guarantee the irreducibility of the associated representation and we present a family of irreducible representations of dimension arbitrarily large.


\section{Parametrization of non unitary self-adjoint representations of $\B_{n}$} \label{teoFundamental}

In this section we are going to construct representations of $\B_n$ and we are going to parametrize those that satisfy three special conditions. We start recalling the definition of {\it{cocycle}}.

Let $G$ be a discrete group, $\pi$ be an action of $G$ on $X$, and let $\nu:X\rightarrow \N$ be a $\pi$-invariant function, that is for each $g\in G$, $\nu(\pi(g)x)=\nu(x)$ for all $x\in X$. For each non negative integer $m$ we fix a vector space $V_m$ of dimension $m$ and for each $x\in X$, if $\nu(x)=m$, we choose the vector space $V_x=V_m$. Let $V=\bigoplus_{x\in X} V_x$. A cocycle is a function $U:G\times X\rightarrow \textrm{GL}(V)$ which satisfies the following properties:
\begin{enumerate}
\item  for each $x\in X$, $U(\cdot, x): G\rightarrow \textrm{ GL}(V_x)$,
\item  $U(1,x)=1_{V_x} $, for all $x\in X$,
\item  $U(g_1g_2,x)=U(g_1,\pi(g_2)x)U(g_2,x)$ for all $x\in X$ and for all $g_1, g_2\in G$.
\end{enumerate}

If $G$ is a discrete group presented by generators and relations, it is enough to define a cocycle $U$ in the generators with some relations that come from the relations of $G$. In the case $G=\B_{n}$, the equations of cocycle for the generators are,

\begin{equation} \label{ecuacion 1}
\begin{array}{rl}
&U(\tau_k, \pi(\tau_{k+1})\pi(\tau_k) x) U(\tau_{k+1}, \pi(\tau_k) x) U(\tau_k, x)  = \\
   & \qquad\qquad\qquad \qquad \quad = U(\tau_{k+1},\pi(\tau_k)\pi( \tau_{k+1})x) U(\tau_k,\pi(\tau_{k+1})x) U(\tau_{k+1}, x)
   \end{array}
\end{equation}
if $1\leq k\leq n-2$
\begin{equation}\label{ecuacion 2}
U(\tau_j,\pi(\tau_{k})x) U(\tau_k, x) =  U(\tau_k, \pi(\tau_j)x) U(\tau_j, x)
\end{equation}
 if $|j-k|>1$.

From now on fix the notation for $\pi_k x=\pi(\tau_k)x$. We can define the following representations of $\B_n$.
Let $(X, \pi, \nu, U)$ be a $4$-tuple where
\begin{enumerate}
\item $X$ is a finite set;
\item $\pi$ is an action of $\B_n$ on $X$;
\item $\nu:X\rightarrow \N$ is a $\pi$-invariant function, that is for each $k$, $\nu(\pi_k(x))=\nu(x)$ for all $x\in X$;
\item $U: \B_n \times X\rightarrow \textrm{GL}(V)$ is a cocycle, where $V:=\bigoplus_{x\in X} V_x$ and $V_x:=V_m$ if $\nu(x)=m$. Note that each element in $V$ can be written by $\sum_{x\in X} x\otimes v_x$.
\end{enumerate}
Then
$$
\phi=\phi_{(X, \pi, \nu, U)}: \B_n \rightarrow \textrm{GL(V)}
$$ defined by
\begin{equation} \label{representacion}
\phi(\tau_k)( \sum_{x\in X} x\otimes v_x )=\sum_{x\in X} x\otimes U(\tau_k,\pi_k^{-1} x) v_{\pi_k^{-1} x}
\end{equation}
is a representation of $\B_n$ on $V$.

Observe that if the action is transitive, that is $X$ is the orbit of an element $x_0\in X$, then $V$ is the set of global sections of a trivial vector bundle, because $V_x=W$ for all $x\in X$ and $V=X\times W$.

\begin{exa}
Let $\rho$ be a representation of $\B_{n}$ on a vector space $W$. Consider the $4$-tuple $(X, \pi, \nu, U)$, where $\nu(x)=\textrm{dim}(W)$ for all $x\in X$ and  the cocycle $U$ is a constant function on $X$ defined by $U(\tau_k,x)= \rho(\tau_k)$. Then $\phi_{(X, \pi, \nu, U)}$ is a representation of dimension $|X| \textrm{dim}(W)$. In section $4$ we are going to give conditions in the $4$-tuple to obtain an irreducible representation.
\end{exa}

Using this construction of representations, we can see that all the representations of $\B_n$ that satisfy the properties (\ref{condicion (1)}), (\ref{condicion (2)}) and (\ref{condicion (3)}) given in the introduction are of this type. More precisely, they can be parametrized as follows.

From now on for a representation $\psi$ of $\B_n$ fix the notation $\psi_k :=\psi(\tau_k)$. Let $N$ be the algebra of linear operators generated by the spectral projections associated to the operators $\psi_k^2$.

\begin{thm}\label{teorema 2}
Let $\psi:\B_n\rightarrow \textrm{GL}(V)$ be a finite dimensional representation of the braid group $\B_{n}$. Then, $\psi$ satisfies the following conditions, for all $k,j$,
\begin{enumerate}
\item $\psi_k$ is a self-adjoint operator such that $\psi_k^2\neq \lambda 1_V$;
\item $[\psi_k^2, \psi_j^2]=0$;
\item $\psi_k P_{j,l} \psi_k^{-1}\in N$ for all spectral projection $P_{j,l}$ associated to $\psi_j^2$,
\end{enumerate}
if and only if  $\psi$ is equivalent to $\phi_{(X, \pi,\nu, U)}$, for some $4$-tuple  $(X, \pi,\nu, U)$, where the action $\pi$ satisfies that $\pi_k^2=1$ for all $k$ and $U$ is a cocycle such that $U(\tau_k,x)^*=U(\tau_k,\pi_k x)$ for all $x\in X$ and $U(\tau_k,x)$ $U(\tau_k, x)^*$ commute with $U(\tau_j,x)\ U(\tau_j, x)^*$ for all $k,j$.
\end{thm}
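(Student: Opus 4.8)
The plan is to prove the two implications separately. For the "if" direction, I would start from a $4$-tuple $(X,\pi,\nu,U)$ with $\pi_k^2=1$ and the stated adjointness/commutation hypotheses on $U$, and verify that $\phi:=\phi_{(X,\pi,\nu,U)}$ satisfies (1)--(3). The key computation is to identify the adjoint of $\phi(\tau_k)$ with respect to the natural inner product on $V=\bigoplus_{x}V_x$ in which the subspaces $V_x$ are mutually orthogonal. Since $\phi(\tau_k)$ sends $V_x$ to $V_{\pi_k x}$ via $U(\tau_k,x)$, its adjoint sends $V_{\pi_k x}$ back to $V_x$ via $U(\tau_k,x)^*=U(\tau_k,\pi_k x)$; combined with $\pi_k^2=1$ this gives $\phi(\tau_k)^*=\phi(\tau_k)$, i.e. self-adjointness. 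A short computation with the cocycle identity then shows $\phi(\tau_k)^2$ acts on $V_x$ by $U(\tau_k,\pi_k x)U(\tau_k,x)=U(\tau_k,x)^*U(\tau_k,x)$, a positive operator, and its spectral projections lie block-diagonally; the hypothesis that $U(\tau_k,x)U(\tau_k,x)^*$ commutes with $U(\tau_j,x)U(\tau_j,x)^*$ gives (2), and (3) follows by tracking how conjugation by $\phi(\tau_k)$ moves these block projections around using $\pi_k^2=1$ and the cocycle relations. One must also check $\psi_k^2\neq\lambda 1_V$, which will be built into the choice of $X$ (taking $X$ with more than one $\pi_k$-orbit structure, or $U$ genuinely non-scalar).

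For the "only if" direction, suppose $\psi$ satisfies (1)--(3). From (1), each $\psi_k$ is self-adjoint, so $\psi_k^2$ is positive; from (2) the operators $\psi_k^2$ are simultaneously diagonalizable, so $V$ decomposes into common eigenspaces for all the $\psi_k^2$. I would let $X$ index these joint eigenspaces (equivalently, the minimal projections of the commutative algebra generated by the $\psi_k^2$), set $V_x$ to be the corresponding eigenspace, and define $\nu(x)=\dim V_x$. Condition (3) says precisely that conjugation by $\psi_k$ permutes these minimal projections — this gives an action $\pi$ of each $\tau_k$ on $X$, and because $\psi_k^2$ is a non-trivial polynomial in the projections fixed by $\pi_k$ while $\psi_k\psi_k^2\psi_k^{-1}=\psi_k^2$, one checks $\pi_k^2=\mathrm{id}$. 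Then $\psi(\tau_k)$ maps $V_x$ isomorphically onto $V_{\pi_k x}$, and writing $\psi(\tau_k)|_{V_x}=:U(\tau_k,x)$ (after fixing identifications $V_x\cong V_{\nu(x)}$) the braid relations for $\psi$ translate, block by block, into exactly the cocycle equations (\ref{ecuacion 1}) and (\ref{ecuacion 2}); self-adjointness of $\psi_k$ becomes $U(\tau_k,x)^*=U(\tau_k,\pi_k x)$, and the remaining commutation condition on the $U U^*$ is a restatement of (2). One then checks that $\pi$ (a priori only defined on generators) is a genuine action of $\B_n$ — this follows because the relations of $\B_n$ are satisfied by $\psi$, hence at the level of the induced permutations of $X$, and that $\pi$ factors through the quotient where $\tau_k^2=1$ as already noted.

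The main obstacle I expect is the bookkeeping in the "only if" direction needed to consistently choose the identifications $V_x\cong V_{\nu(x)}$ so that the resulting $U$ is an honest cocycle valued in $\mathrm{GL}(V_m)$'s and not merely a "cocycle up to the chosen isomorphisms"; this requires checking that the $\pi$-invariance of $\nu$ lets one make compatible choices along each $\pi$-orbit, and that the cocycle equations are unaffected by (or can absorb) the remaining freedom. A secondary subtlety is verifying $\pi_k^2=1$ rigorously: one must use both the non-scalar hypothesis in (1) and condition (3) to rule out $\pi_k$ acting with larger order, since a priori conjugation by $\psi_k$ could permute the minimal projections in a cycle of length $>2$ — the fact that $\psi_k$ commutes with $\psi_k^2$ and that the joint spectral decomposition is invariant forces the orbits of $\pi_k$ to have size at most $2$.
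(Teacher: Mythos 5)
Your overall strategy coincides with the paper's: simultaneously diagonalize the commuting operators $\psi_k^2$, let $X$ index the joint eigenspaces $V_x=\bigcap_k \mathrm{Im}\,P_{k,x_k}$, read condition (3) as saying that conjugation by $\psi_k$ permutes the minimal idempotents of the commutative algebra $N$ (the paper establishes this permutation property by a hands-on combinatorial lemma, Lemma \ref{accion}; your ``automorphism of $N$'' phrasing is a cleaner route to the same fact), and then extract the cocycle $U(\tau_k,x)$ from the blocks of $\psi_k$, with the braid relations turning into the cocycle equations. The identification issue you flag is handled in the paper exactly as you suggest: one fixes $V_x=V_m$ whenever $\nu(x)=m$ and uses the $\pi$-invariance of $\nu$, which is proved from the invertibility of $\psi_k$.

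The one step that fails as you argue it is $\pi_k^2=1$. You propose to deduce it from the fact that $\psi_k$ commutes with $\psi_k^2$, together with non-scalarity and condition (3), claiming this forces the $\pi_k$-orbits to have length at most $2$. It does not: $\psi_k\psi_k^2\psi_k^{-1}=\psi_k^2$ only says that $\pi_k$ fixes the $k$-th coordinate of every $x\in X$, i.e.\ preserves each spectral projection of $\psi_k^2$; it places no constraint on how $\pi_k$ permutes the spectral projections of $\psi_j^2$ for $j\neq k$, so a priori $\pi_k$ could still act with a cycle of length $3$ or more on $X$. The correct (and simpler) argument, which is the paper's, uses the self-adjointness of $\psi_k$ itself rather than that of $\psi_k^2$: since the subspaces $V_x$ are mutually orthogonal and $\psi_k$ maps $V_x$ into $V_{\pi_k x}$, its adjoint maps $V_x$ into $V_{\pi_k^{-1}x}$, so $\psi_k=\psi_k^*$ forces $\pi_k x=\pi_k^{-1}x$ for every $x$, i.e.\ $\pi_k^2=1$, and simultaneously yields $U(\tau_k,x)^*=U(\tau_k,\pi_k x)$. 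You already invoke self-adjointness to obtain the latter identity, so the repair is immediate, but as written your justification of $\pi_k^2=1$ is not valid. (A minor further remark: in the ``if'' direction neither you nor the paper actually verifies $\phi_k^2\neq\lambda 1_V$; as the paper itself observes before Theorem \ref{irreduc}, constant cocycles violate it, so this is a genuine extra non-degeneracy requirement on the $4$-tuple rather than something that follows from the stated hypotheses.)
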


\begin{proof}
 As $\psi_k$ is a self-adjoint operator over $V$ for each $k$, $1\leq k\leq n-1$, then $\psi_k^2$ is self-adjoint as well. Let $\psi_k^2=\sum_{l\in I} \lambda_{k,l} P_{k,l}$ be the spectral decomposition of $\psi_k^2$, where the cardinality of $I$ is grater than $1$ because $\psi_k^2\neq \lambda 1_V$.

Note that the index set $I$ does not depend  on $k$. Indeed, each generator of $\B_n$ is conjugate to $\tau_1$ (see
\cite{C} p.655), then for all $k$, $\psi_k$ has the same eigenvalues than $\psi_1$. On the other hand, the eigenvalues of $\psi_k^2$ are the squared of the eigenvalues of $\psi_k$, therefore $\psi_k^2$ have the same eigenvalues than $\psi_1^2$.
Then the cardinality of $I$ is the same for all $k$.

Moreover, the commutativity condition ensures that the operators $\{\psi_k^2\}_{k=1}^{n-1}$ simultaneously diagonalize and $V$ can be decomposed in a direct sum of subspaces $V_{x}=\cap_{k=1}^{n-1} {\rm Im} P_{k,x_k}$, where $x=(x_1, \dots, x_{n-1})$. That is, if
$$X=\{(x_1, \dots, x_{n-1}) : x_i\in I, 1\leq i\leq n-1 {\textrm{  and  }  } \cap_{k=1}^{n-1} {\textrm{Im}} P_{k,x_k}\neq \{0\}\}$$
then $V=\bigoplus_{x\in X} V_x$. Let $\nu:X \rightarrow \N$ be the function defined by $\nu(x):=\textrm{dim}(V_x)$.

Note that there is a correspondence between projections $P_{k,l}$ and the following subset of $X$,
$$
X_{k,l}:=\{x\in X : x_k=l\}
$$
Then $\{x\}=\cap_{j=1}^{n-1} X_{j, x_j}$. With this in mind, the condition (3) says that the projection $\psi_k P_{j,l} \psi_k^{-1}$ corresponds to a subset of $X$ that we will call $X_{\pi_k(j,l)}$. As $\psi_k P_{j,l} \psi_k^{-1}\in N$, the subset $X_{\pi_k(j,l)}$ is union of intersection of the subsets $X_{h,l}$.

Therefore, we can define an action $\pi$ of the group $\B_{n}$ on $X$ given by
\begin{equation}\label{defaccion}
\pi_k(x)=\pi_k(\cap_{j=1}^{n-1} X_{j,x_j}):=\cap_{j=1}^{n-1} X_{\pi_k(j,x_j)}
\end{equation}

We are going to prove that $\pi$ is actually an action in lemma \ref{accion}.

Now, we prove that $\nu(\pi_k x)=\nu(x)$ for all $x\in X$ and all $k$, $1\leq k \leq n-1$. Note that if $v\in V_x=\cap_{j=1}^{n-1} \textrm{Im}P_{j,x_j}$, then $P_{j,x_j}(v)=v$ for all $j$. Therefore, for all $j$, $\psi_k(v)= \psi_k(P_{j,x_j}(v))=P_{\pi_k(j,x_j)} \psi_k(v)$, by condition (\ref{condicion (3)}). This means that $\psi_k(v)\in \cap_{j=1}^{n-1} \textrm{Im}P_{\pi_k(j,x_j)}=V_{\pi_k x}$.

Suppose that there exists $x\in X$ such that $\nu(x)=m$ and $\nu(\pi_k(x))<m$. Let $\{h_1, \dots h_m\}$  be a base of $V_{x}$. Then the set $\{h_1,\dots, h_m\}$ is linearly independent in $V$. As $\psi_k$ is invertible, the set $\{w_1=\psi_k(h_1), \dots, w_m=\psi_k(h_m)\}$ is linearly independent in $V$ as well. But $w_i\in V_{\pi_k x}$, for all $i$. Then $\{w_1, \dots, w_m\}$ is a linearly independent set in $V_{\pi_k x}$. That is a contradiction because $\nu(\pi_k(x))<m$.  Thus, $\nu(\pi_k x)=\nu(x)$ for all $x\in X$.

Now, denote any element $v\in V$ by $v=\sum_{x\in X}x\otimes v_x$ to remark the dependence on $x$ of the component of $v$ in $V_x$.
For each $k$, consider the following operator on $V$,
$$A_k( \sum_ {x\in X} x\otimes v_x)= \sum_{x\in X } x\otimes v_{\pi_k x}$$
Observe that $A_k$ is well defined by the $\pi$-invariance of $\nu$ and it is
invertible, $A_k^{-1}( \sum_{x\in X} x\otimes v_x)=\sum_{x\in X} x \otimes v_{\pi_k^{-1} x}$. Furthermore, they have the following property,

\begin{equation} \label{propiedad (2)}
A_k P_{j,l}=P_{\pi_k^{-1}(j,l)} A_k
\end{equation}

On the other hand, condition (3) says
\begin{equation}\label{propiedad (1)}
 \psi_k P_{j,l}=P_{\pi_k(j,l)} \psi_k
\end{equation}

Let $U(\tau_k):= \psi_k A_k $, it commutes with all the
projection $P_{j,l}$ by (\ref{propiedad (2)}) and (\ref{propiedad (1)}). Then $U(\tau_k)$ decomposes in blocks which acts in each $V_x$, that is there exists $U(\tau_k, x):V_x\rightarrow V_x$ such that

$$ U(\tau_k)(\sum_{x\in X} x\otimes v_x)=\sum_{x\in X} x\otimes U(\tau_k, x) v_x $$

Then $\psi_k= U(\tau_k) A_k^{-1} $, that is
$$
 \psi_k( \sum_{x\in X} x\otimes v_x)= \sum_{x\in X} x\otimes U(\tau_k, \pi_k^{-1} x) v_{\pi_k^{-1} x}
$$

The relations of the braid group imply that the operators $U(\tau_k, x)$ verify the following equations,
$$
\begin{array}{rl}
&U(\tau_k, \pi_k^{-1} x) U(\tau_{k+1}, \pi_{k+1}^{-1} \pi_k^{-1} x) U(\tau_k, \pi_k^{-1}\pi_{k+1}^{-1} \pi_k^{-1} x) = \\
    &\qquad\qquad\qquad\qquad U(\tau_{k+1}, \pi_{k+1}^{-1} x)\  U(\tau_k,\pi_k^{-1}\pi_{k+1}^{-1}x)\ U(\tau_{k+1},\pi_{k+1}^{-1} \pi_k^{-1} \pi_{k+1}^{-1} x)
\end{array}
$$
if $1\leq k\leq n-2$,

$$
U(\tau_k, \pi_k^{-1}x) U(\tau_j,\pi_j^{-1}\pi_{k}^{-1}x)= U(\tau_j, \pi_j^{-1}x) U(\tau_k, \pi_k^{-1}\pi_j^{-1} x)
$$if $|j-k|>1$.

This means that $U:\B_n \times X\rightarrow\textrm{GL}(V)$ is a cocycle. Using the inner product $V$, we obtain that $\psi_k^*(\sum_{x\in X} x\otimes v_x)=\sum_{x\in X} x\otimes U(\tau_k, x)^*v_{\pi_k x}$. Note that under this inner product the subspaces $V_x$ are orthogonal. But $\psi_k$ is self-adjoint, then for all $v=\sum_{x\in X}x\otimes v_x$,
$$
 \sum_{x\in X} x\otimes U(\tau_k, x)^* v_{\pi_k x}= \sum_{x\in X} x\otimes U(\tau_k, \pi_k^{-1} x) v_{\pi_k^{-1} x}
$$

Therefore $U(\tau_k, x)^*v_{\pi_k x}= U(\tau_k, \pi_k^{-1} x)v_{\pi_k^{-1} x}$. But this equality has sense if $\pi_k =\pi_k^{-1}$. In such case $U(\tau_k,x)^*=U(\tau_k,\pi_k x)$ for all $x\in X$.

It rest to see that $U(\tau_k,x)U(\tau_k, x)^*$ commutes with $U(\tau_j,x)U(\tau_j, x)^*$ for all $k,j$. Observe that $\psi_k^2=\psi_k\psi_k^*$, then
\begin{equation}\label{2}
\begin{array}{rl}
\psi_k^2 \psi_j^2&(\sum_{x\in X}x\otimes v_x)=\psi_k\psi_k^* \psi_j\psi_j^*(\sum_{x\in X}x\otimes v_x) \\
&        \\
                                   &=\sum_{x\in X} x\otimes U(\tau_k, \pi_k^{-1}x)U(\tau_k, \pi_k^{-1}x)^*U(\tau_j, \pi_j^{-1} x) U(\tau_j,\pi_j^{-1} x)^* v_{x}
\end{array}
\end{equation}
On the other hand
$$
\psi_j^2 \psi_k^2(\sum_{x\in X}x\otimes v_x)=\sum_{x\in X} x\otimes U(\tau_j, \pi_j^{-1}x)U(\tau_j, \pi_j^{-1}x)^*U(\tau_k, \pi_k^{-1} x) U(\tau_k,\pi_k^{-1} x)^* v_{x}
$$
As $\psi_k^2$ commutes with $\psi_j^2$, then for each $x\in X$,
$$[U(\tau_k, \pi_k^{-1}x)U(\tau_k, \pi_k^{-1}x)^*,U(\tau_j, \pi_j^{-1} x) U(\tau_j,\pi_j^{-1}x)^*]=0$$
\

Conversely, we have already seen that $\phi=\phi_{(X, \pi, \nu, U)}$ define a representation of $\B_n$ on $V=\bigoplus_{x\in X} V_x$. It rests to see that it satisfies the conditions (\ref{condicion (1)}), (\ref{condicion (2)}) and (\ref{condicion (3)}).
\

We have just seen that $\phi_k^*(\sum_{x\in X}x\otimes v_x)=\sum_{x\in X} x\otimes U(\tau_k, x)^*v_{\pi_k x}$
but the hypothesis over $U$ said that $U(\tau_k, x)^*=U(\tau_k, \pi_k x)$, then
$$
\begin{array}{rl}
\phi_k^*(\sum_{x\in X}x\otimes v_x)&=\sum_{x\in X} x\otimes U(\tau_k, x)^*v_{\pi_k x}= \sum_{x\in X} x\otimes U(\tau_k, \pi_k x) v_{\pi_k x}\\
           &        \\
   &= \sum_{x\in X} x\otimes U(\tau_k, \pi_k^{-1} x) v_{\pi_k^{-1} x}=\phi_k(\sum_{x\in X} x\otimes v_x)
\end{array}
$$

Therefore $\phi_k$ is self-adjoint for all $k$. The condition (\ref{condicion (2)}) follows directly from the property that $U(\tau_k, x) U(\tau_k, x)^*$ commutes with $U(\tau_j, x) U(\tau_k, x)^*$ and the calculus (\ref{2}). For the last condition, note that $\phi_k=\sum_{l\in I} \lambda_{k,l} P_{k,l}$, where $P_{k,l}$ is the orthogonal projection over the subspace $\bigoplus_{x\in X_{k,l}} V_x$ with $X_{k,l}=\{x\in X : x_k=l\}$. Then, we have that
$$
\begin{array}{rl}
\phi_k P_{j,l}\phi_k^{-1}(\sum_{x\in X}&x\otimes v_x)=\phi_k P_{j,l}\left( \sum_{x\in X} x\otimes U(\tau_k, x)^{-1}v_{\pi_k x}\right)\\
   &  \\
&= \phi_k\left(\sum_{x\in X} x\otimes \chi_{X_{j,l}}(x)U(\tau_k,  x)^{-1} v_{\pi_k x}\right) \\
  &   \\
&=\sum_{x\in X }x\otimes \chi_{X_{j,l}}(\pi_k^{-1}x)U(\tau_k,\pi_k^{-1} \pi_k x)U(\tau_k, x)^{-1} v_{\pi_k^{-1} \pi_k x} \\
   &   \\
& =\sum_{x\in X}x\otimes \chi_{X_{j,l}}(\pi_k^{-1}x) v_x= P_{\pi_k(X_{j,l})}\left(\sum_{x\in X}x\otimes v_x\right)
\end{array}
$$ (here, $\chi_{X_{j,l}}(x)=1$ if $x\in X_{j,l}$ and $\chi_{X_{j,l}}(x)=0$ in other cases),
hence (3) is true as we wanted.
\end{proof}

This parametrization by $4$-tuples is not a bijective correspondence, that is, different tuples may define equivalent representations. The following lemma complete the proof of the theorem.

\begin{lem}\label{accion}
The map $\pi$ given by (\ref{defaccion}) is an action of $\B_{n}$ on $X$.
\end{lem}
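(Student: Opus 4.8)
The plan is to reduce the assertion to the elementary fact that an algebra automorphism of a finite--dimensional commutative semisimple algebra permutes its primitive idempotents.

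First I would pin down the algebra $N$ exactly. The projections $P_{j,l}$ commute pairwise (they are polynomials in the simultaneously diagonalizable operators $\psi_j^2$), so $N$ is commutative; moreover $P_{j,l}=\sum_{x\in X_{j,l}}P_x$, where $P_x$ denotes the orthogonal projection onto $V_x$, so $N\subseteq\bigoplus_{x\in X}\C P_x$. Conversely $P_x=\prod_{j=1}^{n-1}P_{j,x_j}\in N$, since a product of commuting orthogonal projections is the orthogonal projection onto the intersection of their images, here $\cap_j\textrm{Im}\,P_{j,x_j}=V_x$. Hence $N=\bigoplus_{x\in X}\C P_x$, and its primitive idempotents are precisely $\{P_x:x\in X\}$; in particular every idempotent of $N$ is an orthogonal projection onto a subsum of the $V_x$, so $\psi_kP_{j,l}\psi_k^{-1}$ is genuinely such a projection, onto the subsum indexed by the subset $X_{\pi_k(j,l)}$.

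Next I would show that conjugation by $\psi_k$ restricts to an automorphism of $N$. Condition (3) gives $\psi_kP_{j,l}\psi_k^{-1}\in N$, so $\textrm{Ad}(\psi_k)$ maps $N$ into $N$. For the opposite inclusion, note $\psi_k^2=\sum_l\lambda_{k,l}P_{k,l}\in N$; putting $Q=\psi_kP_{j,l}\psi_k^{-1}\in N$ one gets $\psi_k^{-1}P_{j,l}\psi_k=\psi_k^{-2}Q\psi_k^2=Q\in N$ because $N$ is commutative (this incidentally yields $\pi_k^2=1$). Thus $\textrm{Ad}(\psi_k)|_N$ is an automorphism of $\bigoplus_x\C P_x$, hence permutes the primitive idempotents; call the induced permutation $\pi_k$, so $\psi_kP_x\psi_k^{-1}=P_{\pi_k(x)}$. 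Since $\psi_kP_x\psi_k^{-1}=\prod_j\psi_kP_{j,x_j}\psi_k^{-1}=\prod_jP_{\pi_k(j,x_j)}$ is the orthogonal projection onto $\cap_jX_{\pi_k(j,x_j)}$ (product of commuting projections), equating it with the atomic projection $P_{\pi_k(x)}$ forces $\cap_jX_{\pi_k(j,x_j)}=\{\pi_k(x)\}$, which is exactly formula (\ref{defaccion}); so (\ref{defaccion}) is well defined and each $\pi_k$ is a bijection of $X$.

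Finally I would upgrade this to a group action. Since $\psi$ is a homomorphism, the $\psi_k$ satisfy the braid relations, hence so do the automorphisms $\textrm{Ad}(\psi_k)$ of $\textrm{End}(V)$; the subgroup they generate preserves $N$ (by the previous step applied to both $\psi_k$ and $\psi_k^{-1}$), so restriction to $N$ gives a homomorphism from that subgroup into $\textrm{Aut}(N)\cong\textrm{Sym}(X)$, and composing with $\psi$ gives a homomorphism $\B_n\to\textrm{Sym}(X)$ sending $\tau_k$ to $\pi_k$; this is the action. Equivalently one can verify $\pi_k\pi_{k+1}\pi_k=\pi_{k+1}\pi_k\pi_{k+1}$ and $\pi_k\pi_j=\pi_j\pi_k$ for $|k-j|>1$ directly from $P_{\pi_k\pi_l(x)}=(\psi_k\psi_l)P_x(\psi_k\psi_l)^{-1}$ and the braid relations for $\psi$. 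The step I expect to carry the weight is the well--definedness in the third paragraph — that the conjugate of an atomic projection $P_x$ is again atomic — which is precisely why one must first identify $N$ with the full diagonal algebra $\bigoplus_x\C P_x$ rather than some proper subalgebra; once that is in place the action property is a formal consequence of $\textrm{Ad}$ being a group homomorphism.
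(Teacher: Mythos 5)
Your proof is correct, and its central step --- the well-definedness of $\pi_k(x)$ --- is handled by a genuinely different, more structural argument than the paper's. The paper never identifies $N$ explicitly: it expands each $\pi_k(X_{j,x_j})$ as a union of intersections of the sets $X_{r,l}$, argues that a common family $X_{r,l_r}$, $1\leq r\leq n-1$, must occur in every such expansion, and then shows by a term-by-term analysis that $\bigcap_j\pi_k(X_{j,x_j})$ collapses to the single point $\bigcap_r X_{r,l_r}$, with non-emptiness coming from the invertibility of $\psi_k$ exactly as in your first paragraph. You replace that combinatorial bookkeeping with the identification $N=\bigoplus_{x\in X}\C P_x$ and the observation that $\mathrm{Ad}(\psi_k)$ restricts to an automorphism of this commutative semisimple algebra and therefore permutes its primitive idempotents $P_x$; factoring $P_x=\prod_j P_{j,x_j}$ then recovers formula (\ref{defaccion}) verbatim. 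Your route is shorter and arguably tighter than the union-of-intersections argument, and it yields $\pi_k^2=1$ as a byproduct of conditions (2) and (3) alone (via $\psi_k^{2}\in N$ commuting with $N$), whereas the paper obtains $\pi_k=\pi_k^{-1}$ only later, inside the proof of Theorem \ref{teorema 2}, from self-adjointness of $\psi_k$. One small remark: the surjectivity of $\mathrm{Ad}(\psi_k)|_N$ already follows from injectivity plus finite-dimensionality of $N$, so your explicit computation of $\psi_k^{-1}P_{j,l}\psi_k$ is optional for the lemma itself, though it is what buys you $\pi_k^2=1$. The verification of the braid relations is essentially identical in the two arguments: both conjugate the relations satisfied by the $\psi_k$, yours phrased through the homomorphism property of $\mathrm{Ad}$.
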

\begin{proof}
We must see that $\cap_{j=1}^{n-1} \pi_k(X_{j,x_j})\neq\O$ and that $\pi_k(x)$ is an element of $X$, that is
$\cap_{j=1}^{n-1} \pi_k(X_{j,x_j})= \cap_{j=1}^{n-1} X_{j,l_j}$ for some $l_j \in I$.

In fact, $x$ is identified with the set $\cap_{j=1}^{n-1} X_{j,x_j}$ which is associated to the non-zero projection
$\prod_{j=1}^{n-1} P_{j,x_j}$. In the same way, $\cap_{j=1}^{n-1} \pi_k(X_{j,x_j})$ is associated to the operator
$$
\prod_{j=1}^{n-1}( \psi(\tau_k) P_{j,x_j} \psi(\tau_k^{-1}))=\psi(\tau_k)( \prod_{j=1}^{n-1} P_{j,x_j}) \psi(\tau_k^{-1})
$$ It is non zero since $\psi(\tau_k)$ is invertible. Hence, $\cap_{j=1}^{n-1} \pi_k(X_{j,x_j})\neq\O $.

Note that for each $j$ and $r$, $1\leq j, r\leq n-1$, there exists $l_r\in I$ such that $X_{r,l_r}\cap \pi_k(X_{j,x_j})\neq \O $. In fact, for each $r$, $\sum_{l\in I} P_{r,l}=1_{V}$. Hence $\psi(\tau_k)P_{j, x_j} \psi(\tau_k^{-1})=\sum_{l\in I} (\psi(\tau_k)P_{j, x_j} \psi(\tau_k^{-1})P_{r,l})$. As the left side is a non-zero projection, there exists $l_r\in I$ such that $\psi(\tau_k)P_{j, x_j} \psi(\tau_k^{-1})P_{r,l}$ is non-zero. Then, $X_{r,l_r}\cap \pi_k(X_{j,x_j})$ is a non empty set.

This says that all the sets $X_{r,l_r}$, $1\leq r\leq n-1$, appear in the expression of $\pi_k(X_{j,x_j})$ as a union of intersections of the subsets $X_{j,l}$ of $X$. That is, if
$$
\pi_k(X_{j,x_j})=\bigcup_{m^j\in L^j} X_{t_1^j,m_1^j}\cap X_{t_2^j,m_2^j}\cap \dots \cap
X_{t_{r^j}^j,m_{r^j}^j}
$$ where $1\leq j \leq n-1$, $m^j= (m^j_1, m^j_2,\dots, m_{r^j}^j)$ and $L^j$ is some index set, then for each $r$ there exists $i$ such that $t_i^j=r$ and $m_i^j=l_r$.

Note that $l_r$ can depend on $j$, but we can choose it such that it does not. In fact, we want to see that for each $r$ there exists $l_r\in I$ such that for all $j$, $1\leq j\leq n-1$, $X_{r,l_r}\cap \pi_k(X_{j,x_j})\neq {\O}$. Suppose that for all $l\in I$, there exists $j'$ such that $X_{r,l}\cap \pi_k(X_{j',x_{j'}})={\O}$. Hence,
$$
{\O}=\cup_{l\in I} (X_{r,l}\cap \pi_k(X_{j',x_{j'}}))=(\cup_{l\in I} X_{r,l})\cap \pi_k(X_{j',x_{j'}})= X\cap \pi_k(X_{j',x_{j'}})
$$ which is a contradiction.

Compute $\cap_{j=1}^{n-1} \pi_k(X_{j,x_j})$,
$$
\begin{array}{ll}
&\bigcap_{j=1}^{n-1} \pi_k(X_{j,x_j})=\bigcap_{j=1}^{n-1} (\bigcup_{m^j\in L^j} X_{t_1^j,m_1^j}\cap
X_{t_2^j,m_2^j}\cap \dots \cap X_{t_{r^j}^j,m_{r^j}^j})\\
                       &=\bigcup_{m^1\in L^1, \dots,m^{n-1}\in L^{n-1}}\left(X_{t_1^1,m_1^1}\cap
X_{t_2^1,m_2^1}\cap \dots \cap X_{t_{r^{1}}^1,m_{r^{1}}^1} \cap \dots\cap X_{t_1^k,m_1^k}\cap \right.\\
                        &\qquad \quad \left.\cap X_{t_2^k,m_2^k}\cap \dots \cap X_{t_{r^k}^k,m_{r^k}^k} \cap\dots \cap
X_{t_1^{n-1},m_1^{n-1}}\cap\dots \cap X_{t_{r^{n-1}}^{n-1},m_{r^{n-1}}^{n-1}}\right)
\end{array}
$$
But if in a term of the union
 \begin{equation} \label{1}
\begin{array}{rl}
&X_{t_1^1,m_1^1}\cap
X_{t_2^1,m_2^1}\cap \dots \cap X_{t_{r^{1}}^1,m_{r^{1}}^1} \cap \dots\cap  X_{t_1^k,m_1^k}\cap X_{t_2^k,m_2^k}\cap\\
                        &\qquad \qquad \cap\dots \cap X_{t_{r^k}^k,m_{r^k}^k} \cap\dots \cap
X_{t_1^{n-1},m_1^{n-1}}\cap\dots \cap X_{t_{r^{n-1}}^{n-1},m_{r^{n-1}}^{n-1}}
\end{array}
\end{equation}
appears the subsets $X_{t_{\alpha}^l,m_{\alpha}^l}$ and $X_{t_{\alpha}^l,m_{\beta}^l}$, with the same first subindex $t_{\alpha}^l$, then the second ones have to be equal, $\alpha=\beta$, or the intersection (\ref{1}) is empty. Therefore, each term of the union is intersection at most of $n-1$ sets, or it is empty. But the $n-1$ sets $X_{r,l_r}$ are the unique sets which appear in the decomposition of $\pi_k(X_{j,x_j})$ for all $j$. Then, they appear in each not empty term of the union.

Therefore $ \bigcap_{j=1}^{n-1} \pi_k(X_{j,x_j})=\cap_{r=1}^{n-1} X_{r,l_r}$ or it is empty. But we have already
seen that it is non empty, so $\pi_k(x)$ is well defined.

On the other hand, the braid group equations imply that
$$
\psi_k \psi_{k+1} \psi_k P_{j,x_j}\psi_k^{-1} \psi_{k+1}^{-1}\psi_k^{-1}= \psi_{k+1}\psi_k \psi_{k+1}P_{j, x_j}
\psi_{k+1}^{-1} \psi_k^{-1} \psi_{k+1}^{-1}
$$
if $1\leq k\leq n-1$ and
$$
\psi_i \psi_k P_{j, x_j} \psi_k^{-1} \psi_i^{-1}= \psi_k \psi_i P_{j, x_j} \psi_i^{-1}
\psi_k^{-1}
$$
if $|i-k|>1$,  hence
$$
\pi_k \pi_{k+1} \pi_k=\pi_{k+1} \pi_k \pi_{k+1}
$$ for all $k$ such that $1\leq k\leq n-2$, and
$$
\pi_k \pi_j =\pi_j \pi_k
$$ if $|j-k|>1$.

Therefore $\pi$ is an action of the braid group $\B_{n}$ on $X$.
\end{proof}

Observe that the proof of the Theorem \ref{teorema 2} can be used for other groups such as Artin groups. This topic will be studied in future works.

 \

One property of self-adjoint representations is that they are completely reducible. In fact, if $W$ is a $\phi$-invariant subspace of $V$, then $W^{\bot}$, the orthogonal complement of $W$, is an invariant subspace of $\phi^*=\phi$. Moreover, if the representations $\phi$ satisfies the hypothesis of the theorem \ref{teorema 2}, the following corollary said that it is a direct sum of irreducible representations of that type. That is
$$
\phi=\bigoplus_{a\in A} \phi_{(X_a,\pi_a, \nu_a, U_a)}
$$

\begin{cor} \label{subespacio}
Let $\left(\phi_{(X, \pi, \nu, U)}, V=\bigoplus_{x\in X} V_x\right)$ be a representation of $\B_{n}$ which satisfies the conditions of the Theorem \ref{teorema 2}. Then, every invariant subspace $W$ of $V$ is a direct sum space $W=\bigoplus_{x\in X_W} W_x$, where
$X_{W}\subset X$, $W_x\subset V_x$ and $W_{\pi_k(x)}=W_x$ for all $x\in X_W$.
\end{cor}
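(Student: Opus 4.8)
The plan is to exploit the fact that, although $W$ is assumed invariant only under the generators $\phi_k:=\phi(\tau_k)$, condition (2) of Theorem \ref{teorema 2} forces it to be invariant under the entire commutative algebra $N$ generated by the spectral projections $P_{k,l}$; this is exactly what is needed to split $W$ along the decomposition $V=\bigoplus_{x\in X}V_x$ on which the parametrization is built. So the first step is to check $N$-invariance of $W$. Since $W$ is $\phi$-invariant we have $\phi_k^2 W\subseteq W$ for every $k$, and by condition (1) each $\phi_k^2$ is diagonalizable with a fixed finite set of distinct eigenvalues $\{\lambda_{k,l}\}_{l\in I}$; hence each $P_{k,l}$ is a polynomial in $\phi_k^2$ (Lagrange interpolation at the $\lambda_{k,l}$). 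Therefore $W$ is invariant under every $P_{k,l}$, and so under the algebra $N$ they generate.

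Next I would recover the decomposition of $W$. Recall from the proof of Theorem \ref{teorema 2} that $V_x=\bigcap_{j}\textrm{Im}\,P_{j,x_j}$, that the commuting family $\{P_{j,l}\}$ satisfies $\sum_{l\in I}P_{k,l}=1_V$ for each $k$, and hence that $P^x:=\prod_{j=1}^{n-1}P_{j,x_j}$ is the orthogonal projection onto $V_x$ with $\sum_{x\in X}P^x=1_V$. For $w\in W$ we then have $w=\sum_{x\in X}P^x w$, where each summand lies in $W$ (by the first step, as $P^x\in N$) and in $V_x$ (by construction). Setting $W_x:=W\cap V_x$ and $X_W:=\{x\in X:W_x\neq\{0\}\}$ we obtain $W=\bigoplus_{x\in X_W}W_x$ with $W_x\subseteq V_x$, as asserted.

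It remains to relate $W_x$ with $W_{\pi_k(x)}$, and here I would use the explicit formula (\ref{representacion}). For $v_x\in V_x$, viewed as the element of $V$ with $x$-component $v_x$ and all other components zero, a direct substitution in (\ref{representacion}) shows that $\phi_k$ sends it to the element of $V$ whose only nonzero component is $U(\tau_k,x)v_x\in V_{\pi_k x}$. If $v_x\in W_x$, this image lies in $W$, so $U(\tau_k,x)v_x\in W_{\pi_k x}$; that is, $U(\tau_k,x)(W_x)\subseteq W_{\pi_k x}$. Replacing $x$ by $\pi_k x$ and using $\pi_k^2=1$ gives $U(\tau_k,\pi_k x)(W_{\pi_k x})\subseteq W_x$; since $U(\tau_k,x)$ is invertible, comparing dimensions forces $\dim W_{\pi_k x}=\dim W_x$, and hence $U(\tau_k,x)$ restricts to an isomorphism of $W_x$ onto $W_{\pi_k x}$. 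In particular $x\in X_W$ if and only if $\pi_k x\in X_W$, so $X_W$ is a union of $\pi$-orbits, and under the identification $V_{\pi_k x}=V_{\nu(x)}=V_x$ built into the construction this is the asserted relation $W_{\pi_k(x)}=W_x$. Restricting $\pi$, $\nu$ and $U$ to $X_W$ and to the subspaces $W_x$ then exhibits $W$ as a representation of the same type.

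The step requiring the most care is the last one: verifying precisely that (\ref{representacion}) evaluates to $\phi_k(x\otimes v_x)=\pi_k x\otimes U(\tau_k,x)v_x$ — one must carry the vector correctly through the $\pi_k^{-1}$ appearing in the formula — and keeping in mind that ``$W_{\pi_k(x)}=W_x$'' is to be read through the cocycle isomorphism $U(\tau_k,x)$, since $U(\tau_k,x)$ need not fix $W_x$ as a literal subspace of $V_{\nu(x)}$. Everything else is a direct consequence of the structure already made explicit in the proof of Theorem \ref{teorema 2}.
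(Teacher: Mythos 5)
Your argument is correct and arrives at the same decomposition, but by a genuinely different route. The paper first observes that, $\phi_k$ being self-adjoint and $W$ invariant, the orthogonal complement $W^{\bot}$ is also invariant, so $P_W$ commutes with every $\phi_k$; it then shows that the compressions $\widetilde{\phi_k}=P_W\phi_k P_W$ again satisfy the three conditions of Theorem \ref{teorema 2}, and reads off $X_W$ and $W_x=\mathrm{Im}\,P_W\cap\bigl(\bigcap_k \mathrm{Im}\,P_{k,x_k}\bigr)$ by running the parametrization of that theorem on $(W,\widetilde{\phi})$. You instead obtain invariance of $W$ under the algebra $N$ directly, via Lagrange interpolation ($P_{k,l}$ is a polynomial in $\phi_k^2$ and $\phi_k^2W\subseteq W$), and then split $w=\sum_{x}P^xw$ by hand. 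Your version is more elementary and for this step uses only invariance under the $\phi_k^2$, not self-adjointness of the $\phi_k$ themselves; the paper's version has the advantage of exhibiting $W$ as a representation $\phi_{(X_W,\pi_W,\nu_W,P_WUP_W)}$ of the same type, which is what the remark preceding the corollary is after.

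The one place where you and the paper genuinely part ways is the final claim $W_{\pi_k(x)}=W_x$. You prove what actually follows from the hypotheses, namely that $U(\tau_k,x)$ maps $W_x$ isomorphically onto $W_{\pi_k x}$ (so $X_W$ is a union of $\pi$-orbits and the $W_x$ along an orbit have equal dimension), and you are right to flag that literal equality of $W_x$ and $W_{\pi_k x}$ as subspaces of $V_{\nu(x)}$ does not follow: nothing in the hypotheses forces $U(\tau_k,x)$ to preserve $W_x$. The paper's own justification (``the $\pi$-invariance of $\nu_W$ shows that $W_{\pi_k(x)}=W_x$'') only yields equality of dimensions, so it carries the same gap; this is not a defect of your proof relative to the paper's. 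Do note, however, that the literal equality is what gets invoked later in the proof of Theorem \ref{induced} (the step $U(\tau_k,y)v\in S_{\pi_k y}=S_y$), so adopting your weaker but correct conclusion would require adjusting that later argument.
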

\begin{proof}
Let $W\subseteq V$ be an invariant subspace, then $W$ is also invariant for $\phi(\tau_k)^*=\phi(\tau_k)$. Thus,
$\phi(\tau_k)$ commutes with $P_W$, the orthogonal projection onto the subspace $W$.

Therefore, $\widetilde{\phi_k}:=P_W \phi_k P_W$ is a representation of $\B_{n}$ on $W$ that verifies the hypothesis of the Theorem \ref{teorema 2}. In fact,
$$
\widetilde{\phi_k}^2=P_W \phi_k P_W \phi_k P_W=P_W \phi_k^2 P_W= \sum_{l\in I} \lambda_{k,l}^2 P_W P_{k,l} P_W=\sum_{l\in I} \lambda_{k,l}^2 Q_{k,l}
$$
and
$$
\begin{array}{rl}
\widetilde{\phi_k}^2 \widetilde{\phi_j}^2 &=P_W \phi_k^2 P_W P_W \phi_j^2 P_W =P_W \phi_k^2\phi_j^2 P_W \\
&=P_W \phi_j^2\phi_k^2 P_W= \widetilde{\phi_j}^2\widetilde{\phi_k}^2
\end{array}
$$
Finally
$$
\widetilde{\phi_k} Q_{j,l} \widetilde{\phi_k}^{-1}=P_W \phi_k P_W P_W P_{k,l} P_W P_W \phi_k^{-1} P_W= P_W \phi_k P_{k,l}\phi_k^{-1} P_W
$$ belongs to the algebra generated by the projections $Q_{j,l}$ because $\widetilde{N}=P_W N P_W$.

Then, $\widetilde{\phi}=\phi_{( X_W, \pi_W,\nu_W, P_W U P_W)}$. Moreover
$$
X_W=\{ (x_1, \dots, x_{n-1})\in X :  P_W P_{k, x_k} P_W \neq 0 \textrm{    for all   } k, 1\leq k \leq n-1\}\subseteq X
$$
and
$$
W_x=\cap_k \textrm{ Im} (P_W P_{k,x_k} P_W)=\textrm{ Im} P_W \cap(\cap_k \textrm{ Im}
 P_{k, x_k})
$$
Therefore, $W_x\subset V_x$ for all $x\in X_W$. Finally, the $\pi$-invariance of $\nu_W$ shows that $W_{\pi_k(x_0)}=W_x$ for all $x\in X_W$.
\end{proof}

\section{Known examples: Local Representations}\label{Ejemplos}

Let $V$ be a vector space and $c: V\otimes V\rightarrow V\otimes
V$ an invertible lineal operator, $c$ satisfies the
braid equation if it satisfies the following equality in $V \otimes V
\otimes V= V^{\otimes 3}$
$$
(c\otimes 1_V)(1_V\otimes c)(c\otimes 1_V)=(1_V\otimes c)(c \otimes
1_V)(1_V\otimes c)
$$

In this case $(V,c)$ is called a braid vector space and $c$ a $R$-matrix. On the space $V^{\otimes n}$ one may define a representation of $\B_n$ in
the following way. For each $k=1, \dots , n-1$ let $c(\tau_k):=c_k: V^{\otimes n}\rightarrow V^{\otimes n}$ given by
$c_k=1_{k-1}\otimes c \otimes 1_{n-k-1}$, where $1_k= 1_V\otimes \dots \otimes 1_V$, is the $k$ times tensor
product of the identity on $V$; this representation is called {\it{local}}.

If there exists a basis $\beta=\{v_1, \dots, v_m\}$ of $V$ such that
$c(v_i\otimes v_j)= q_{ij} v_j\otimes v_i$ for all $i,j$, $1\leq i,j\leq m$ where the $q_{ij}$ are
non zero complex numbers, the representation is called of {\it{diagonal}} type \cite{AG}. In this case, the operators $c_k$ satisfy the hypothesis of the theorem \ref{teorema 2} if $q_{ij}=\overline{q_{ji}}$ for all $i,j$, $1\leq i,j \leq m$. In fact, $c_k$ is self-adjoint because $q_{ij}=\overline{q_{ji}}$, moreover

$$ c_k^2 (v_{j_1}\otimes \dots \otimes v_{j_n})=q_{j_k,
j_{k+1}}^2 v_{j_1}\otimes\dots \otimes v_{j_k}\otimes v_{j_{k+1}}
\otimes \dots \otimes v_{j_n}
$$
If $S=\left\{(a,b): a,b\in \{1, \dots, \textrm{dim}(V)\} \right\}$, we have that
$$
c_k^2 =\sum_{(a,b)\in S} q_{(a,b)}^2 P_{k,(a,b)}
$$
where $P_{k, (a,b)}$ is the projection over the subspace of
$V^{\otimes n}$ generated by all the vectors $v_{j_1}\otimes \dots
\otimes v_{j_n}$ such that $j_k=a$ y $j_{k+1}=b$.

As $c_k^2$ is diagonal for all $k$, $c_k^2$ and $c_j^2$ commute for all $j$.

We have to see the condition (3) of theorem \ref{teorema 2}. In fact,
$$
c_kP_{j, (a, b)}c_k^{-1}=\left\{
\begin{array}{ll}
              P_{j, (a,b)}        &  \textrm{    if    } |j-k|>1 \\
              P_{j, (b, a)}       & \textrm{    if    }  j=k       \\
              \sum_{c=1}^{\dim V} P_{k-1, (a, c)}\wedge P_{k, (c, b)}       & \textrm{    if    }  j=k-1  \\
              \sum_{c=1}^{\dim V} P_{k, (a, c)}\wedge P_{k+1, (c, b)}       & \textrm{    if    }  j=k+1
\end{array}\right.
$$ where $P\wedge Q$ is the orthogonal projection over the subspace $\textrm{Im}P\cap \textrm{Im}Q$.

Therefore, following the proof of Theorem \ref{teorema 2}, the representation is equivalent to $\phi_{(X, \pi, \nu,  U)}$, where
\begin{enumerate}
\item $X=\{(x_1, \dots , x_{n-1}) : x_i=(a_i, b_i) , a_i,b_i \in \{1, \dots,
 \dim V\} \textrm{  and  } b_i=a_{i+1}\}$,
\item The action $\pi$ is defined by
$$
\begin{array}{rl}
\pi_k&((a_1, a_2),(a_2, a_3), \dots, (a_{n-1}, b_{n-1}))=\\
&=((a_1, a_2),\dots, (a_{k-1}, a_{k+1}), (a_{k+1}, a_k),(a_k, a_{k+2}), \dots, (a_{n-1}, b_{n-1}))
\end{array}
$$
and $\pi_k^{-1}(x)=\pi_k(x)$.
\item  $\nu(x)=1$ for all $x\in X$,
\item  $U(\tau_k, x)=q_{x_k}$.
\end{enumerate}

Let $\alpha :V^{\otimes n} \rightarrow V$ be the lineal operator
defined in the basis of $V^{\otimes n}$ by
$$
\alpha(v_{j_1}\otimes \dots \otimes v_{j_n})=\chi_{((j_1, j_2),
(j_2,j_3), \dots ,(j_{n-1}, j_n))}
$$

It verifies that $\alpha(c_k(v_{j_1}\otimes \dots \otimes v_{j_n}))=
\phi_k (\alpha(v_{j_1}\otimes \dots \otimes v_{j_n}))$ showing the equivalence of the representations.

\section{Irreducibility and Equivalence of Representations}

Given the $4$-tuple $(X, \pi, \nu, U)$, we will analyze the irreducibility of the associated representation $\phi_{(X, \pi, \nu, U)}$.

\begin{thm}\label{irreduc}
Let $\phi_{(X, \pi, 1, U)}$ be a self-adjoint representation of $\B_n$ that satisfies the hypothesis of the theorem \ref{teorema 2}.
If the action $\pi$ is transitive, then the representation is irreducible.
\end{thm}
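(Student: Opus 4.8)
The plan is to combine Corollary \ref{subespacio} with the two special features of the present situation: that $\nu\equiv 1$, and that $\pi$ is transitive. Recall that irreducibility means precisely that $V$ has no invariant subspace other than $\{0\}$ and $V$, so it suffices to analyze an arbitrary invariant subspace $W\subseteq V$ and show it is trivial or everything.

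First I would apply Corollary \ref{subespacio} to $W$: it gives $W=\bigoplus_{x\in X_W}W_x$ with $X_W=\{x\in X:W_x\neq\{0\}\}\subseteq X$, each $W_x\subseteq V_x$, and $W_{\pi_k(x)}=W_x$ for every $x\in X_W$ and every $k$. Now I would use $\nu\equiv 1$: this forces $\dim V_x=1$ for all $x$, so each $W_x$ is either $\{0\}$ or equals $V_x$. Hence $W=\bigoplus_{x\in X_W}V_x$, i.e. $W$ is completely determined by the subset $X_W$.

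Next I would show $X_W$ is $\pi$-invariant. Assume $W\neq\{0\}$ and choose $x_0\in X_W$, so $W_{x_0}=V_{x_0}\neq\{0\}$. For any $k$, the relation $W_{\pi_k(x_0)}=W_{x_0}\neq\{0\}$ and the one-dimensionality of $V_{\pi_k(x_0)}$ force $W_{\pi_k(x_0)}=V_{\pi_k(x_0)}$, i.e. $\pi_k(x_0)\in X_W$. Thus $\pi_k(X_W)\subseteq X_W$ for every $k$; since $X$ is finite and each $\pi_k$ is a bijection (indeed $\pi_k^2=1$ by Theorem \ref{teorema 2}), $X_W$ is stable under every $\pi_k$, hence under the whole group $\pi(\B_n)$, which is generated by the $\pi_k$. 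Finally, transitivity of $\pi$ means the only $\pi$-invariant subsets of $X$ are $\emptyset$ and $X$; as $X_W\neq\emptyset$ we conclude $X_W=X$, so $W=\bigoplus_{x\in X}V_x=V$. Therefore the only invariant subspaces are $\{0\}$ and $V$, and $\phi_{(X,\pi,1,U)}$ is irreducible.

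I do not expect a serious obstacle here: essentially all the analytic work — using self-adjointness to simultaneously diagonalize the $\psi_k^2$ and to see that an invariant subspace respects the block decomposition $V=\bigoplus_x V_x$ and is compatible with $\pi$ — has already been absorbed into Theorem \ref{teorema 2} and Corollary \ref{subespacio}. The only points needing care are (i) invoking $\nu\equiv 1$ to collapse ``$W_x\subseteq V_x$'' into a binary alternative, and (ii) the elementary passage from ``$W_{\pi_k(x)}=W_x$'' to $\pi$-invariance of $X_W$, followed by the observation that a nonempty invariant subset of a transitive $\B_n$-set is the whole set. (Complete reducibility of self-adjoint representations is not even needed for this argument, though it is available.)
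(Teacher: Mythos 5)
Your proposal is correct and follows essentially the same route as the paper: both apply Corollary \ref{subespacio} to an invariant subspace $W$, use $\nu\equiv 1$ to reduce each $W_x$ to the alternative $\{0\}$ or $V_x$, and then use $W_{\pi_k x}=W_x$ together with transitivity of $\pi$ to conclude that the set of $x$ with $W_x\neq\{0\}$ is empty or all of $X$. Your write-up merely spells out the passage to $\pi$-invariance of $X_W$ a bit more explicitly than the paper does.
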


\begin{proof}
Observe that if $U(\tau_k,x)=c$ for all $x\in X$ and for all $k$, then $\phi=\phi_{(X, \pi, 1, U)}$ does not satisfy the hypothesis of the theorem \ref{teorema 2}. Indeed, in such cases
$$\phi_k^2(\sum x\otimes v_x)= \sum x\otimes c^2v_v=c^2\sum x\otimes v_x$$
therefore $\phi_k^2=c^2 1_v$.

Let $W\subseteq V$ be an invariant subspace, then by corollary \ref{subespacio}, $\widetilde{\phi}:=P_W \phi P_W =
\phi_{( X_W, \pi_W, \nu_W, P_W U P_W)}$, where  $X_W \subseteq X$, $W_x\subset V_x$ and $W_{\pi_k x}=W_x$.

As $\nu(x)=1$ for all $x\in X$,  $W_x\subseteq \C$, then $W_x=0$ or $W_x=\C$, for $x\in X_W$. Let $Y:=\{x\in X: W_x=0\}$. Now, as $X=\pi(\B_n)x_0$ and $W_{\pi_k x}=W_x$ for all $x$, then $Y=\emptyset$ or $Y=X$.  Therefore  $W=0$ or $W=V$.
\end{proof}

Note that if $U(\tau_k,x)=c$ for all $x\in X$ and for all $k$, then we have already seen that the representation $\phi=\phi_{(X, \pi, 1, U)}$ does not satisfy the hypothesis of the theorem \ref{teorema 2}. Even then we can consider the representation $\phi$ of $\B_n$. Note that it is not irreducible. In fact $v=\sum_{x\in X} x\otimes 1$ generates an invariant subspace of $V=\bigoplus_{x\in X}\C \cong \C^{|X|}$.

\
We can obtain other irreducible representations of $\B_n$.
 \begin{thm}\label{induced}
Let $\rho$ be an irreducible self-adjoint representations of
$\B_n$ on a  vector space $W$ of dimension $m$ such that $1<m<\infty$. Let
$(X, \pi, \nu, U)$ be a $4$-tuple where $\B_n$ acts transitively on $X$, $\nu(x)=m$ and $U(\tau_k,
x):=\rho (\tau_k)$ for all $x\in X$. Then $\phi_{(X, \pi, \nu, U)}$ is an irreducible representation
of $\B_n$ of dimension $|X|\textrm{dim}(W)$.
\end{thm}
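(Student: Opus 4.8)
The plan is to argue by contradiction: suppose $W' \subseteq V$ is a nonzero proper invariant subspace of $\phi = \phi_{(X,\pi,\nu,U)}$, and derive a contradiction with the irreducibility of $\rho$. First I would invoke Corollary \ref{subespacio}: since $\phi$ satisfies the hypotheses of Theorem \ref{teorema 2} (here one should check, or simply note as in the proof of Theorem \ref{induced}'s hypotheses, that using an irreducible $\rho$ of dimension $m>1$ forces $\phi_k^2 \neq \lambda 1_V$, so the theorem applies), every invariant subspace decomposes as $W' = \bigoplus_{x \in X_{W'}} W'_x$ with $W'_x \subseteq V_x = W$ and, crucially, $W'_{\pi_k(x)} = W'_x$ for all $x$ and all $k$. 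Since the action $\pi$ is transitive, all the $X$-indices are reached from any $x_0$, so $X_{W'} = X$ (if it were empty $W'=0$) and the subspaces $W'_x$ are \emph{all equal} to a single fixed subspace $W_0 := W'_{x_0} \subseteq W$.

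Next I would show $W_0$ is $\rho$-invariant. Take $x_0 \in X$ and $v \in W_0 = W'_{x_0}$; then $x_0 \otimes v \in W'$, and applying $\phi(\tau_k)$ gives, by formula (\ref{representacion}), $\pi_k(x_0) \otimes U(\tau_k, \pi_k^{-1}x_0) v$ — wait, more carefully, the component of $\phi(\tau_k)(x_0 \otimes v)$ lands in $V_{\pi_k x_0}$ and equals $U(\tau_k, x_0)v = \rho(\tau_k)v$, using $\pi_k^2 = 1$ and the constancy of the cocycle. Since $W'$ is invariant, this component lies in $W'_{\pi_k x_0} = W_0$. Thus $\rho(\tau_k)W_0 \subseteq W_0$ for every generator $\tau_k$, so $W_0$ is a $\B_n$-subrepresentation of $(W,\rho)$. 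By irreducibility of $\rho$, either $W_0 = 0$ or $W_0 = W$, giving $W' = 0$ or $W' = \bigoplus_{x\in X} W = V$ — the desired contradiction. Finally, $\dim V = |X|\,\dim(W)$ is immediate from $V = \bigoplus_{x\in X} V_x$ with each $\dim V_x = m$.

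The main obstacle is bookkeeping the cocycle/action indices correctly so that "the $V_{\pi_k x_0}$-component of $\phi(\tau_k)(x_0\otimes v)$ equals $\rho(\tau_k)v$" is genuinely true: one must use $\nu \equiv m$ (so all $V_x$ are literally the same $W$), the transitivity to identify all $W'_x$, and the fact that the constant cocycle $U(\tau_k,\cdot) = \rho(\tau_k)$ together with $\pi_k = \pi_k^{-1}$ makes the action of $\phi(\tau_k)$ "permute the $X$-labels and apply $\rho(\tau_k)$ fiberwise." A secondary point worth stating explicitly is that the constant cocycle does satisfy the hypotheses of Theorem \ref{teorema 2} \emph{and} yields $\phi_k^2 \neq \lambda 1_V$ precisely because $\rho(\tau_k)^2 \neq \lambda 1_W$ — which itself follows from $\rho$ being irreducible of dimension $m > 1$ (if $\rho(\tau_k)^2$ were scalar for some, hence all, $k$, one can check this is incompatible with irreducibility in dimension $>1$, since then $\rho(\tau_k)$ has at most two eigenspaces each $\B_n$-stable up to the braid action, forcing a proper invariant subspace — this is the only place a small separate argument is needed). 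Once Corollary \ref{subespacio} is in hand, the rest is the short transitivity-plus-irreducibility argument above.
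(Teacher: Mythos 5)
Your proposal reproduces the paper's own argument almost step for step (Corollary~\ref{subespacio}, then transitivity to identify all the fibres $W'_x$, then fibrewise $\rho$-invariance), so in structure it matches the source. But there is a genuine gap, and it sits exactly at the step you treat as routine: the appeal to Corollary~\ref{subespacio}. That corollary decomposes an invariant subspace according to the joint spectral decomposition of the operators $\phi_k^2$, and its conclusion that $W'=\bigoplus_{x\in X_{W'}}W'_x$ with $W'_x\subseteq V_x$ presupposes that the fibres $V_x$ of the $4$-tuple coincide with the joint eigenspaces $\cap_k \textrm{Im}\,P_{k,x_k}$. For the constant cocycle $U(\tau_k,x)=\rho(\tau_k)$ this fails: one computes $\phi_k^2(x\otimes v)=x\otimes\rho(\tau_k)^2v$, so each $\phi_k^2$ acts identically on every fibre and its spectral projections do not separate the $V_x$ at all. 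Consequently invariant subspaces need not be graded by $X$. Indeed $\phi(\tau_k)(x\otimes w)=\pi_k x\otimes\rho(\tau_k)w$, i.e.\ $\phi\cong\sigma\otimes\rho$ with $\sigma$ the permutation representation of $\B_n$ on $\C[X]$, and the diagonal subspace $\{\sum_{x\in X}x\otimes w : w\in W\}$ is a proper invariant subspace whenever $|X|>1$ --- the same diagonal vector the paper itself exhibits just before Theorem~\ref{induced} in the scalar case. So the transitivity-plus-irreducibility argument cannot close as written; this hole is present in the paper's own proof as well, but your write-up does not detect or repair it.

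A secondary, independent error: your parenthetical claim that irreducibility of $\rho$ in dimension $m>1$ forces $\rho(\tau_k)^2\neq\lambda 1_W$ is false. Any irreducible representation of $\SB_n$ of dimension greater than $1$, pulled back to $\B_n$, is irreducible, can be realized by real symmetric (hence self-adjoint) matrices, and satisfies $\rho(\tau_k)^2=1_W$. The sketched justification (``two eigenspaces \dots forcing a proper invariant subspace'') does not work, because the braid relations only relate eigenspaces of different generators and produce no invariant subspace. So even the verification that $\phi$ satisfies condition (1) of Theorem~\ref{teorema 2} requires an additional hypothesis on $\rho$ rather than following from irreducibility.
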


\begin{proof}
$\phi_{(X, \pi, \nu, U)}$ is a representation that satisfies the conditions of the Theorem \ref{teorema 2}. Let $S$ be an invariant subspace of
$V:=\bigoplus_{x\in X}W$. By corollary  \ref{subespacio}, $S=\bigoplus_{x\in X_S} S_x$, where $X_S\subset X$, $S_x$ is a subspace
of $W$ and $S_{\pi_k x}=S_x$. Given $y\in X_S$, we want to see that $S_y$ is an invariant subspace by
$\rho$. Let $v\in S_y$ be a non zero vector, let us see that for
all $k$, $1\leq k\leq n-1$, $\rho(\tau_k)v \in S_y$. For each
$k$, let $v_k= \pi_k y \otimes v \in S$.
Then $ \phi_k (v_k)= \pi_k y\otimes U(\tau_k, \pi_k^{-1} \pi_k y) v \in S$ and $U(\tau_k, y) v\in S_{\pi_k y}=S_y$ for all $y$ by corollary \ref{subespacio}. But $ U(\tau_k, y) v= \rho(\tau_k)v$. Therefore $\rho(\tau_k)v \in S_y$ as we wanted. Thus $S_y$ is $\rho$-invariant for all $y\in X_S$, and $S_y=0$ or $W$ because $\rho$ is irreducible. Now,  $\pi$ acts transitively on $X$ and $S_{\pi_k y}=S_y$ then $S_y=0$ for all $y$ or $S_y=W$ for all $y$. Therefore $S=0$ or $S=V$.
\end{proof}

Note that this proposition permit us to construct, from a fixed irreducible representation, a sequence of irreducible representation of $\B_n$ of strictly increasing dimension. Therefore we can give explicit examples of irreducible representations of dimension  arbitrary large.

\begin{thm}\label{dimgrande}
For any positive integer $M$, there exists an irreducible representation of $\B_n$ of dimension greater that $M$.
\end{thm}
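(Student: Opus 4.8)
The plan is to bootstrap from the representations already constructed and iterate Theorem \ref{induced}. First I would observe that we have a starting irreducible self-adjoint representation of $\B_n$ of dimension strictly between $1$ and $\infty$: the representation $\phi_1$ from the introduction (equivalent to the standard representation) has dimension $n>1$, is self-adjoint, and satisfies the hypotheses of Theorem \ref{teorema 2}. This gives a base case $\rho_0 := \phi_1$ on a space $W_0$ with $\dim W_0 = n > 1$.

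Next I would set up the iteration. For the inductive step, suppose $\rho_i$ is an irreducible self-adjoint representation of $\B_n$ on $W_i$ with $1 < \dim W_i < \infty$. Choose a finite set $X$ on which $\B_n$ acts transitively with $|X| \geq 2$ — for instance, take $X = \{1,\dots,r\}$ with $r\geq 2$ and let $\B_n$ act through the natural surjection $\B_n \twoheadrightarrow \SB_n \to \SB_r$ (or, more concretely, any transitive action such as the one on the $m$-subsets used in the introduction). Set $\nu \equiv \dim W_i$ and $U(\tau_k, x) := \rho_i(\tau_k)$ for all $x$. Then Theorem \ref{induced} applies and yields an irreducible self-adjoint representation $\rho_{i+1} := \phi_{(X,\pi,\nu,U)}$ of dimension $|X|\,\dim W_i \geq 2\dim W_i > \dim W_i$; moreover $\rho_{i+1}$ again satisfies the hypotheses of Theorem \ref{teorema 2} (this is part of the statement of Theorem \ref{induced}), so the induction can continue. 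I should check that $\rho_{i+1}$ still has dimension $> 1$, which is immediate since $\dim W_i > 1$.

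Finally I would conclude by unboundedness. Since $\dim W_{i+1} \geq 2\dim W_i$, we get $\dim W_i \geq 2^i\, n \to \infty$. Hence, given any positive integer $M$, choosing $i$ with $2^i n > M$ produces an irreducible representation $\rho_i$ of $\B_n$ of dimension greater than $M$, and $\rho_i$ is in addition self-adjoint. This completes the proof.

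I do not expect a serious obstacle here: the entire argument is an assembly of Theorems \ref{teorema 2} and \ref{induced} plus the existence of the base representation. The only points that require a word of care are (i) verifying that the base representation genuinely meets the hypotheses of Theorem \ref{induced} — namely that it is irreducible, self-adjoint, and of finite dimension $>1$, all of which are recorded for $\phi_1$ in the introduction — and (ii) exhibiting, for each step, a transitive $\B_n$-action on a finite set of size at least $2$, which is elementary via the quotient $\B_n \twoheadrightarrow \SB_n$. So the main (minor) obstacle is purely bookkeeping: making sure the hypothesis "satisfies the conditions of Theorem \ref{teorema 2}" is preserved along the iteration, which is exactly what Theorem \ref{induced} guarantees for its output.
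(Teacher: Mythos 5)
Your proposal is correct and follows essentially the same route as the paper: both bootstrap by iterating Theorem \ref{induced}, starting from one of the irreducible self-adjoint representations $\phi_m$ of the introduction and feeding the output back in as the new fiber representation until the dimension exceeds $M$ (the paper uses the transitive permutation action on the $m$-subsets at every stage, so the dimension grows as $\binom{n}{m}^r$ rather than your $2^i n$, but this is immaterial). One small caution: for $2<r<n$ there is no surjection $\SB_n\to\SB_r$, so your generic choice of transitive set should be either $r=2$ via the sign map or the $m$-subset action you mention as a fallback.
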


\begin{proof}
Let $\phi_m$ the representation given in the introduction. Let $(X_1, \pi_1, \nu_1, U_1)$ be a $4$-tuple where $X_1$ is the set of $n$-tuples with $m$ ones and $n-m$ zeros, $\pi_1$ is the action given by permutation of the coordinates of $X$, $\nu_1$ is constant equal to $\textrm{dim}(V_m)=\left( \begin{smallmatrix} n\\\noalign{\medskip}m \end{smallmatrix}\right)$ and $U(\tau_k, x)=\phi_m(\tau_k)$. Then, by the previous theorem $\phi_1=\phi_{(X_1, \pi_1, \nu_1, U_1)}$ is an irreducible representation of $\B_n$ of dimension
$$|X_1|\left( \begin{smallmatrix} n\\\noalign{\medskip}m \end{smallmatrix}\right)=\left( \begin{smallmatrix} n\\\noalign{\medskip}m \end{smallmatrix}\right)^2$$

If $\left( \begin{smallmatrix} n\\\noalign{\medskip}m \end{smallmatrix}\right)^2>M$, we have obtained the wished representation.
If not, we repeat the process. Let $(X_2, \pi_2, \nu_2, U_2)$ such that $X_2:=X_1$, $\pi_2:=\pi_1$, $\nu_2(x)=\left( \begin{smallmatrix} n\\\noalign{\medskip}m \end{smallmatrix}\right)^2$ and $U_2(\tau_k,x)=\phi_1(\tau_k)$. Thus, the dimension of $\phi_2$ is $\left( \begin{smallmatrix} n\\\noalign{\medskip}m \end{smallmatrix}\right)^3$. Repeating this process, there exists $r\geq 1$ such that $\left( \begin{smallmatrix} n\\\noalign{\medskip}m \end{smallmatrix}\right)^r>M$. Thus, $\phi_{r-1}$ is the wanted representation.
\end{proof}

It is known that transitive actions of a group $G$ on a set $X$ are in correspondence with subgroups $H$ that fix an element $x_0$ of $X$. We wonder if the construction of the proposition \ref{induced} is the Induced Representation of the representation $\rho$ restricted to $H$. In general, the answer is no. Before showing an example where the constructions are different, recall that the support of the character of an induced representation is concentrated in $H$.

Let $m>4$ and let $\rho=\phi_m$ be the irreducible representation of $\B_n$ given in the introduction. Let $X=X_m$ be the set of $n$-tuples with $m$ ones and $n-m$ zeros. The action $\pi$ of permuting the coordinates of $X$ is transitive. Let $\phi=\phi_{(X, \pi, \nu, U)}$ the representation constructed in theorem \ref{induced} with the previous parameters. Finally, let $\chi$ be the character of this representation. If $H$ is the subgroup of $\B_n$ that fixes $x_0=(1, \dots, 1, 0, \dots, 0)$, then $\chi(\tau_m)>\frac{(m-2)!}{(n-2)!(m-n)!}>1\neq 0$ but $\tau_m$ does not belong to $H$. Therefore $\phi$ is not an induced representation.

\

We are going to finish this work with a results about equivalence of representations associated to $4$-tuples. Suppose that $\phi_{(X_1, \pi_1, \nu_1, U_1)}$ is equivalent to $\widetilde{\phi}_{(X_2, \pi_2, \nu_2, U_2)}$, where $X_1=X_2$, $\pi_1=\pi_2$ and the $4$-tuples satisfy the hypothesis of the Theorem \ref{teorema 2}. Then, there exist an operator $c:V=\bigoplus V_x\rightarrow W= \bigoplus W_x$ such that, for each $k$, $c \phi_k=\widetilde{\phi}_k c$. Therefore, the spectral projections of $\widetilde{\phi}_k$ are conjugated to the spectral projections of $\phi_k$. Thus, $\textrm{dim}(V_x)=\textrm{dim}(\bigcap_{k=1}^{n-1}\textrm{Im}P_{k,x_k})=\textrm{dim}(W_x)$ and $\nu_1(x)=\nu_2(x)$ for all $x\in X$, that is $V_x\cong W_x$, suppose $V_x=W_x$ for all $x$. Therefore $c$ is decomposed in blocks $c(x)$ that acts in each $V_x$ and they satisfy that
\begin{equation}\label{equiv}
c(\pi_k x) U(\tau_k,x)c(x)^{-1}=\widetilde{U}(\tau_k,x)
\end{equation}

Therefore a necessary condition for two $4$-tuples $(X, \pi, \nu_1, U_1)$ and $(X, \pi, \nu_2, U_2)$ to define equivalent representations is the existence of an operator $c:V\rightarrow W$ such that it decomposes in blocks $c(x)$ that acts in each $V_x$ such that (\ref{equiv}) is true.


\begin{thebibliography}{00}


\addtocounter{page}{0}

\bibitem[1]{AG} N. Andruskiewitsch, M. Gra$\widetilde{\textrm{n}}$a, Braided Hopf Algebras over non Abelian Finite Groups, Boletin de la Academia Nacional de Ciencias 63, (1999) 45-78.

\bibitem[2]{C} W.L. Chow, On the algebraical Braid Group, Annals of math. 49 (1948), 654-658.

\bibitem[3]{EG1} C.M. Egea, E. Galina, Some Irreducible Representations of the Braid Group $\B_n$
                 of dimension greater that $n$, to appear in Journal of Knot Theory and Its Ramifications, ArXiv math.RT/0809.4173v1 (2008).

\bibitem[4]{F} E. Formanek, Braid Group Representations of Low Degree Proc. london Mayh. Soc. 73 (1996), 279-322.

\bibitem[5]{L} Levaillant, C; Irreducibility of the BMW algebra of type $A_{n-1}$, PhD thesis (2008), California Institute of Techonology.

\bibitem[6]{LR} Larsen, M.; Rowell, E.; Unitary braid representations with finite image, arXiv: math.GR/0805.4222v1.

\bibitem[7]{S} I. Sysoeva, Dimension n representations of Braid
           Group on n Strings, Journal of Algebra 243 (2001), 518-538.


\bibitem[8]{TYM}  D. Tong, S. Yang, Z. Ma, A New Class of
            representations of braid group, Comm. Theoret. Phys. 26, No 4
            (1996), 483-486.


\end{thebibliography}
\end{document}